\theoremstyle{plain}
\newtheorem{theorem}{Theorem}
\newtheorem{lemma}[theorem]{Lemma}
\newtheorem{corollary}[theorem]{Corollary}
\theoremstyle{remark}
\newtheorem{question}{Question}
\title{Answer to a question of Alon and Lubetzky\\ about the ultimate categorical independence ratio}
\author{{\Large \'Agnes T\'oth}\thanks{The work reported in the paper has been developed in the framework of the project ``Talent care and cultivation in the scientific workshops of BME'' project. This project is supported by the grant T\'AMOP - 4.2.2.B-10/1--2010-0009}\\
Department of Computer Science and Information Theory,\\ Budapest University of Technology and Economics, Hungary\\ {\tt tothagi@cs.bme.hu}}
\date{}
\begin{document}

\maketitle

\begin{abstract}
Brown, Nowakowski and Rall defined the ultimate categorical independence ratio of a graph~$G$ as~$A(G)=\lim\limits_{k\to \infty} i(G^{\times k})$, where $i(G)=\frac{\alpha (G)}{|V(G)|}$ denotes the independence ratio of a graph $G$, and $G^{\times k}$~is the $k$th categorical power of $G$.
Let $a(G)=\max\{\frac{|U|}{|U|+|N_G(U)|}:\textnormal{$U$ is an independent set of $G$}\}$, where $N_G(U)$ is the neighborhood of $U$ in $G$.
In this paper we answer a question of Alon and Lubetzky, namely we prove that if $a(G)\le \frac{1}{2}$ then $A(G)=a(G)$, and if $a(G)>\frac{1}{2}$ then $A(G)=1$.
We also discuss some other open problems related to $A(G)$ which are immediately settled by this result.
\end{abstract}

\section{Introduction}\label{sec1}

The \emph{independence ratio} of a graph $G$ is defined as $i(G)=\frac{\alpha (G)}{|V(G)|},$ that is, as the ratio of the independence number and the number of vertices.
For two graphs $G$ and $H$, their \emph{categorical product} (also called as direct or tensor product) $G\times H$ is defined on the vertex set $V(G\times H)=V(G)\times V(H)$ with edge set $E(G\times H)=\{\{(x_1, y_1),(x_2, y_2)\}\, :\, \{x_1,x_2\}\in E(G) \textnormal{ and } \{y_1,y_2\}\in E(H)\}$. The $k$th categorical power $G^{\times k}$ is the $k$-fold categorical product of $G$. The \emph{ultimate categorical independence ratio} of a graph $G$ is defined as
\[A(G)=\lim\limits_{k\to \infty} i(G^{\times k}).\]
This parameter was introduced by Brown, Nowakowski and Rall in \cite{bnr} where they proved that for any independent set $U$ of $G$ the inequality $A(G)\ge \frac{|U|}{|U|+|N_G(U)|}$ holds, where $N_G(U)$ denotes the neighborhood of $U$ in $G$. Furthermore, they showed that $A(G)>\frac{1}{2}$ implies $A(G)=1$.\\

\noindent Motivated by these results, Alon and Lubetzky \cite{al} defined the parameters $a(G)$ and $a^*(G)$ as follows
\[a(G)=\max\limits_{\textnormal{$U$ is indep. set of $G$}} \frac{|U|}{|U|+|N_G(U)|}
\hspace*{20pt}\textnormal{ and }\hspace*{20pt}
a^*(G)=\begin{cases}
a(G) & \textnormal{ if } a(G)\le \frac{1}{2}\\
1 & \textnormal{ if } a(G)> \frac{1}{2}
\end{cases},\]
and they proposed the following two questions.

\begin{question}[\cite{al}]\label{strong}
Does every graph $G$ satisfy $A(G)=a^*(G)$?
Or, equivalently, does every graph $G$ satisfy $a^*(G^{\times 2})=a^*(G)$?
\end{question}

\begin{question}[\cite{al}]\label{weak}
Does the inequality $i(G\times H)\le \max\{a^*(G),a^*(H)\}$ hold for every two graphs $G$ and $H$?
\end{question}

\noindent The above results from \cite{bnr} give us the inequality $A(G)\ge a^*(G)$. One can easily see the equivalence between the two forms of Question \ref{strong}, moreover it is not hard to show that an affirmative answer to Question \ref{strong} would imply the same for Question \ref{weak} (see \cite{al}).

Following \cite {bnr} a graph $G$ is called self-universal if $A(G)=i(G)$. As a consequence, the equality $A(G)=a^*(G)$ in Question \ref{strong} is also satisfied for these graphs according to the chain inequality $i(G)\le a(G)\le a^*(G)\le A(G)$. Regular bipartite graphs, cliques and Cayley graphs of Abelian groups belong to this class \cite{bnr}. In \cite{my} the author proved that a complete multipartite graph is self-universal, except for the case when $a(G)>\frac{1}{2}$, therefore the equality $A(G)=a^*(G)$ is also verified for this class of graphs. (In the latter case $A(G)=a^*(G)=1$.) In \cite{al} it is shown that the graphs which are disjoint union of cycles and complete graphs satisfy the inequality in Question \ref{weak}.

\medskip

In this paper we answer Question \ref{strong} affirmatively. Thereby a positive answer also for Question \ref{weak} is obtained. Moreover it solves some other open problems related to $A(G)$.
In the proofs we exploit an idea of Zhu~\cite{z} that he used on the way when proving the fractional version of Hedetniemi's conjecture. In Section~\ref{sec2} this tool is presented. Then, in Section~\ref{sec3}, first we prove the inequality 
\[i(G\times H)\le \max\{a(G),a(H)\} \textnormal{, \, for every two graphs $G$ and $H$},\]
and give a positive answer to Question \ref{weak} (using $a(G)\le a^*(G)$). 
Afterwards we prove that
\[a(G\times H)\le\max\{a(G),a(H)\} \textnormal{, \, provided that $a(G)\le \frac 12$ or $a(H)\le \frac 12$},\]
and from this result we conclude the affirmative answer to Question \ref{strong}. (If $a(G)>\frac{1}{2}$ then $a^*(G^{\times 2})=a^*(G)=1$. Otherwise applying the above result for $G=H$ we get $a(G^{\times 2})\le a(G)$, while the reverse inequality clearly holds for every~$G$. Thus we have $a^*(G^{\times 2})=a^*(G)$ for every graph~$G$.)
Finally, in Section~\ref{sec4}, we discuss further open problems which are solved by our result. For instance, we get a proof for the conjecture of Brown, Nowakowski and Rall, stating that $A(G\cup H)=\max\{A(G), A(H)\}$, where $G\cup H$ is the disjoint union of $G$ and $H$.

\section{Zhu's lemma}\label{sec2}

Recently Zhu \cite{z} proved the fractional version of Hedetniemi's conjecture, that is, he showed that for every graph $G$ and $H$ we have $\chi_f(G\times H)=\min\{\chi_f(G),\chi_f(H)\}$, where $\chi_f(G)$ denotes the fractional chromatic number of the graph $G$. During the proof he showed the following result on the independent sets of categorical product of graphs. This will be the key idea also in our case.

Let $U$ be an independent set of $G\times H$. Zhu considered the partition $U$ into $U=A\uplus B$, where 
\begin{equation}\label{part}
\begin{array}{l}
A=\{(x,y)\in U\, :\, \nexists (x',y)\in U \textrm{ s.t. } \{x,x'\}\in E(G)\},\\
B=\{(x,y)\in U\, :\, \exists (x',y)\in U \textrm{ s.t. } \{x,x'\}\in E(G)\}.
\end{array}
\end{equation}

\noindent In the sequel, we keep using the following notations for any $Z\subseteq V(G\times H)$.\\
For any $y\in V(H)$, let
\[Z(y)=\{x\in V(G)\, :\, (x,y)\in Z\}.\]
Similarly, for any $x\in V(G)$, let
\[Z(x)=\{y\in V(H)\, :\, (x,y)\in Z\}.\]
And, let
\[N^G(Z)=\{(x,y)\in V(G\times H)\, :\, x\in N_G(Z(y))\}.\]
In words, $N^G(Z)$ means that we decompose $Z$ into sections corresponding to the elements of $V(H)$, and in each section we pick those points which are neighbors of the elements of $Z(y)$ in the graph~$G$.
Similarly, let \[N^H(Z)=\{(x,y)\in V(G\times H)\, :\, y\in N_H(Z(x))\}.\]
Keep in mind, that $Z(y)\subseteq V(G)$ and $Z(x)\subseteq V(H)$, while $N^G(Z), N^H(Z)\subseteq V(G\times H)$.

\begin{lemma}[\cite{z}]\label{zhu}
The following holds:\\
(1) For every $y\in V(H)$, $A(y)$ is an independent set of $G$. For every $x\in V(G)$, $B(x)$ is an independent set of $H$.\\
(2) $A$, $B$, $N^G(A)$ and $N^H(B)$ are pairwise disjoint subsets of $V(G\times H)$.
\end{lemma}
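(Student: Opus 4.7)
The plan is to translate every nondisjointness hypothesis into the existence of two elements of $U$ that are joined by an edge of $G\times H$, contradicting the independence of $U$; a handful of the cases reduce instead to the forbidden pattern that defines $A$. The defining properties of $A$ and $B$ only talk about $G$-edges within a fixed second coordinate, so the bookkeeping in each case amounts to assembling the available witnesses into a genuine product edge.

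For part~(1), if $x_1,x_2\in A(y)$ were $G$-adjacent, then $(x_2,y)\in U$ is precisely the $G$-neighbor whose nonexistence is required by $(x_1,y)\in A$, an immediate contradiction. The statement about $B(x)$ is a touch subtler because $(x,y_1)$ and $(x,y_2)$ are never adjacent in $G\times H$: here I would use the witness $x'$ supplied by the definition of $B$ for the point $(x,y_2)\in B$, so that $(x',y_2)\in U$ and $\{x,x'\}\in E(G)$. Combined with the hypothetical $\{y_1,y_2\}\in E(H)$, the pair $(x,y_1),(x',y_2)\in U$ then forms a $G\times H$-edge, contradicting independence of $U$.

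For part~(2), I would verify the six intersections separately. The partition $A\uplus B=U$ handles $A\cap B$. Both $A\cap N^G(A)=\emptyset$ and $B\cap N^G(A)=\emptyset$ follow immediately from the definition of $A$: a point in $N^G(A)$ at coordinate $(x,y)$ supplies an $x'\in A(y)$ with $\{x,x'\}\in E(G)$, and if $(x,y)\in A\cup B\subseteq U$ then this is the forbidden in-$U$ $G$-neighbor at $(x',y)\in A$. The case $B\cap N^H(B)=\emptyset$ falls out of part~(1), since it would place two $H$-adjacent elements inside $B(x)$. The two remaining intersections, $A\cap N^H(B)$ and $N^G(A)\cap N^H(B)$, are the genuinely ``product'' ones: in each, one witness supplies a $G$-edge and another supplies an $H$-edge, and pasting them produces two elements of $U$ adjacent in $G\times H$. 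For instance, if $(x,y)\in A\cap N^H(B)$ then some $y'\in B(x)$ with $\{y,y'\}\in E(H)$ gives $(x,y')\in B$, whose $B$-witness $x'$ produces $(x',y')\in U$ adjacent to $(x,y)\in U$; the last intersection is analogous. The only thing to watch out for is distinguishing when a contradiction is to the independence of $U$ (requiring a full product edge) and when it is directly to the definition of $A$ (where a $G$-edge in a single $H$-section already suffices); once this bookkeeping is in place, the verification is routine.
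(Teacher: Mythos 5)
Your proof is correct and follows essentially the same route as the paper: part (1) directly for $A(y)$ and via the $B$-witness $x'$ to manufacture a product edge for $B(x)$, and in part (2) a case-by-case check of the six intersections, with the two genuinely ``product'' cases ($A\cap N^H(B)$ and $N^G(A)\cap N^H(B)$) handled by pasting a $G$-edge witness and an $H$-edge witness into an edge of $G\times H$. The only cosmetic difference is that you group $A\cap N^G(A)$ with $B\cap N^G(A)$ under the definition of $A$, while the paper dispatches $A\cap N^G(A)$ and $B\cap N^H(B)$ together as immediate consequences of part (1); these are trivially interchangeable.
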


For the sake of completeness we prove this lemma.

\begin{proof}
$A(y)$ is independent for every $y\in V(H)$ by definition. If for any $x\in V(G)$ the set $B(x)$ is not independent in $H$, that is $\exists y, y'\in B(x)$, $\{y,y'\}\in E(H)$, then from $(x,y')\in B$ we get that $\exists (x',y')\in U$, $\{x,x'\}\in E(G)$. This is a contradiction, because $(x,y)\in B$ and $(x',y')\in U$ were two adjacent elements of the independent set $U$.

Now we show the second part of the lemma. By definition $A\cap B=\emptyset$. The first part of the lemma implies that the pair $(A, N^G(A))$ is also disjoint, as well as the pair $(B, N^H(B))$.\\
\newpage

\noindent If $(x,y)\in A\cap N^H(B)$ then (by the definition of $N^H(B)$) $\exists (x,y')\in B$, $\{y,y'\}\in E(H)$, and so (by the definition of $B$) $\exists(x',y')\in U$, $\{x,x'\}\in E(G)$, which is a contradiction: $(x,y)\in A$ and $(x',y')\in U$ are adjacent vertices in the independent set $U$.
Similarly, if $(x,y)\in N^G(A)\cap N^H(B)$ then (by the definition of $N^G(A)$) $\exists (x',y)\in A\subseteq U$, $\{x,x'\}\in E(G)$ while (by the definition of $N^H(B)$) $\exists (x,y')\in B\subseteq U$, $\{y,y'\}\in E(H)$, which contradicts to the independence of $U$.
Finally, $(x,y)\in B\cap N^G(A)$ implies that $\exists (x',y)\in A$, $\{x,x'\}\in E(G)$ (by the definition of $N^G(A)$), which is in contradiction with the definition of $A$: there should not be an $(x,y)\in B\subseteq U$ satisfying $\{x,x'\}\in E(G)$.
\end{proof}

\section{Proofs}\label{sec3}

In this section we prove the statements mentioned in the Introduction. In Subsection~\ref{subsec31} we give an upper bound for $i(G\times H)$ in terms of $a(G)$ and $a(H)$. In Subsection~\ref{subsec32} we prove that the same upper bound holds also for $a(G\times H)$ provided that $a(G)\le\frac{1}{2}$ or $a(H)\le\frac{1}{2}$. Thereby we obtain that $A(G)=a^*(G)$ for every graph $G$.

\subsection{Upper bound for $i(G\times H)$}\label{subsec31}

As a simple consequence of Zhu's result the following inequality is obtained.

\begin{theorem}\label{weakthm}
For every two graphs $G$ and $H$ we have
\[i(G\times H)\le \max\{a(G),a(H)\}.\]
\end{theorem}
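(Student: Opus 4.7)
The plan is to apply Zhu's partition from Lemma~\ref{zhu} slicewise and then aggregate. Let $U$ be a maximum independent set of $G\times H$, so $i(G\times H)=|U|/(|V(G)|\cdot|V(H)|)$, and decompose $U=A\uplus B$ as in \eqref{part}.

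First I would exploit Lemma~\ref{zhu}(1) one horizontal slice at a time. For each $y\in V(H)$, the set $A(y)\subseteq V(G)$ is independent in $G$, and note that the $y$-slice of $N^G(A)$ is exactly $N_G(A(y))$ by definition. The definition of $a(G)$ therefore gives
\[|A(y)|\le a(G)\bigl(|A(y)|+|N_G(A(y))|\bigr),\]
with the inequality trivially valid if $A(y)=\emptyset$. Summing over all $y\in V(H)$ collapses the right-hand side and yields $|A|\le a(G)(|A|+|N^G(A)|)$. By the symmetric argument applied to the vertical slices $B(x)$, I get $|B|\le a(H)(|B|+|N^H(B)|)$.

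Next I would bring in Lemma~\ref{zhu}(2): the four sets $A$, $B$, $N^G(A)$, $N^H(B)$ are pairwise disjoint subsets of $V(G\times H)$, so their sizes add up to at most $|V(G)|\cdot|V(H)|$. Writing $M=\max\{a(G),a(H)\}$ and adding the two slicewise estimates gives
\[|U|=|A|+|B|\le M\bigl(|A|+|N^G(A)|+|B|+|N^H(B)|\bigr)\le M\cdot|V(G)|\cdot|V(H)|,\]
which is exactly the desired bound on $i(G\times H)$.

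There is no real obstacle beyond checking that the ingredients fit together cleanly: the only subtlety is that the vertex-set equality $N^G(A)\cap(V(G)\times\{y\})=N_G(A(y))\times\{y\}$ follows directly from the definition of $N^G(\cdot)$, and that the disjointness claim of Lemma~\ref{zhu}(2) is precisely what lets the slicewise $a(G)$- and $a(H)$-bounds be combined without double-counting. Everything else is bookkeeping.
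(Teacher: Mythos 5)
Your proof is correct and follows essentially the same route as the paper: Zhu's partition $U=A\uplus B$, Lemma~\ref{zhu}(1) applied slicewise to bound $|A|$ and $|B|$ via $a(G)$ and $a(H)$, and Lemma~\ref{zhu}(2) to bound the sum of the four disjoint pieces by $|V(G\times H)|$. The only cosmetic difference is that you clear denominators and add the two inequalities directly, whereas the paper keeps everything as a ratio and invokes the mediant inequality; these are algebraically the same.
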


\begin{proof}
Let $U$ be a maximum-size independent set of $G\times H$, then we have
\begin{equation}\label{p1}
i(G\times H) = \frac{\alpha (G\times H)}{|V(G\times H)|} = \frac{|U|}{|V(G\times H)|}.
\end{equation}
We partition $U$ into $U=A\uplus B$ according to (\ref{part}). We also use the notations $A(y)$ for every $y\in V(H)$, $B(x)$ for every $x\in V(G)$, and $N^G(A)$, $N^H(B)$ defined in the previous section.\\
It is clear that $|U|=|A|+|B|$. From the second part of Lemma~\ref{zhu} we have that $|A|+|B|+|N^G(A)|+|N^H(B)|\le |V(G\times H)|$. Observe that $|N^G(A)|=\sum_{y\in V(H)}|N_G(A(y))|$ and $|N^H(B)|=\sum_{x\in V(G)}|N_H(B(x))|$. Hence we get 
\begin{multline}\label{p2}
\frac{|U|}{|V(G\times H)|} \le \frac{|A|+|B|}{|A|+|B|+|N^G(A)|+|N^H(B)|} = \\
=\frac{\sum_{y\in V(H)}|A(y)|+\sum_{x\in V(G)}|B(x)|}{\sum_{y\in V(H)}(|A(y)|+|N_G(A(y))|) + \sum_{x\in V(G)}(|B(x)|+|N_G(B(x))|)}.
\end{multline}
From the first part of Lemma~\ref{zhu} and by the definition of $a(G)$ and $a(H)$ we have $\frac{|A(y)|}{|A(y)|+|N_G(A(y))|}\le a(G)$ for every $y\in V(H)$, and $\frac{|B(x)|}{|B(x)|+|N_H(B(x))|}\le a(H)$ for every $x\in V(G)$, respectively.\\
Using the fact that if $\frac{t_1}{s_1}\le r$ and $\frac{t_2}{s_2}\le r$ then $\frac{t_1+t_2}{s_1+s_2}\le r$, this yields
\begin{equation}\label{p3}
\frac{\sum_{y\in V(H)}|A(y)|+\sum_{x\in V(G)}|B(x)|}{\sum_{y\in V(H)}(|A(y)|+|N_G(A(y))|) + \sum_{x\in V(G)}(|B(x)|+|N_H(B(x))|)} \le\\
\max\{a(G), a(H)\}.
\end{equation}
The inequalities (\ref{p1}), (\ref{p2}) and (\ref{p3}) together give us the stated inequality,
\[i(G\times H) \le \max\{a(G), a(H)\}.\]
\end{proof}

\bigskip

As we stated in the Introduction, from Theorem \ref{weakthm} it follows that the answer to Question \ref{weak} is positive.

\subsection{Answer to Question \ref{strong}}\label{subsec32}

In this subsection we answer Question \ref{strong} affirmatively. To show that $a^*(G^{\times 2})=a^*(G)$ holds for every graph $G$ it is enough to prove that $a(G^{\times 2})\le a(G)$ for every graph $G$ with $a(G)\le\frac{1}{2}$. Because if $a(G)>\frac{1}{2}$ then $a^*(G^{\times 2})=a^*(G)=1$, in addition every $G$ satisfies $a(G^{\times 2})\ge a(G)$. The condition $a(G)\le\frac{1}{2}$ is necessary, since otherwise $A(G)=1$ therefore $i(G^{\times k})$ and $a(G^{\times k})$ as well can be arbitrary close to $1$ for sufficiently large $k$.
A bit more general, we prove the following theorem.

\begin{theorem}\label{strongthm}
If $a(G)\le \frac 12$ or $a(H)\le \frac 12$ then
\[a(G\times H)\le\max\{a(G),a(H)\}.\]
\end{theorem}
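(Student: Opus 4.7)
The plan is to imitate the proof of Theorem~\ref{weakthm}, sharpening the denominator $|V(G\times H)|$ to $|U|+|N_{G\times H}(U)|$. Let $U$ be an independent set of $G\times H$ and let $U=A\uplus B$ be Zhu's partition from~(\ref{part}). Applying Lemma~\ref{zhu}(1) and the definition of $a$ row-by-row and column-by-column, and invoking the pairwise disjointness in Lemma~\ref{zhu}(2), one obtains exactly as in Theorem~\ref{weakthm}
\[|U|\le \max\{a(G),a(H)\}\bigl(|U|+|N^G(A)|+|N^H(B)|\bigr).\]
Consequently the theorem reduces to the key inequality
\[|N^G(A)|+|N^H(B)|\le |N_{G\times H}(U)|. \qquad (\ast)\]

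A direct check shows that $N^H(B)\subseteq N_{G\times H}(U)$: if $(x,y)\in N^H(B)$ then some $y_0\in B(x)$ satisfies $\{y,y_0\}\in E(H)$, so $(x,y_0)\in B$, and by the definition of $B$ there is $(x',y_0)\in U$ with $\{x,x'\}\in E(G)$; hence $(x,y)$ is adjacent to $(x',y_0)\in U$ in $G\times H$. Using $N^G(A)\cap N^H(B)=\emptyset$, the inequality $(\ast)$ is equivalent to constructing an injection
\[\Phi\colon N^G(A)\setminus N_{G\times H}(U)\;\hookrightarrow\;N_{G\times H}(U)\setminus \bigl(N^G(A)\cup N^H(B)\bigr).\]

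To build $\Phi$, I would first analyze a defect vertex $(x,y)\in N^G(A)\setminus N_{G\times H}(U)$ with witness $(x_0,y)\in A$: the assumption $(x,y)\notin N_{G\times H}(U)$ forces $U(x_0)\cap N_H(y)=\emptyset$, equivalently $(x_0,y)\in A'$, where $A'$ is the Zhu set obtained from~(\ref{part}) after interchanging the roles of $G$ and $H$. A short argument using this structural fact, together with Lemma~\ref{zhu}(1), shows that for every column $x\in V(G)$ the set $L(x)\cup B(x)$ is independent in $H$, where $L(x):=\{y:(x,y)\in N^G(A)\setminus N_{G\times H}(U)\}$. Assume WLOG $a(H)\le\tfrac12$ (the case $a(G)\le\tfrac12$ is symmetric). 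The independence of $L(x)\cup B(x)$ and the hypothesis yield $|N_H(S)|\ge|S|$ for every $S\subseteq L(x)\cup B(x)$, and a short inclusion--exclusion (using $|B(x)|\le|N_H(B(x))|$) then upgrades this to $|N_H(S)\setminus N_H(B(x))|\ge|S|$ for every $S\subseteq L(x)$. Hall's theorem now produces an injection $\mu_x\colon L(x)\to V(H)\setminus N_H(B(x))$ with $\mu_x(y)\in N_H(y)$. Setting $\Phi(x,y):=(x,\mu_x(y))$, the witness $(x_0,y)\in U$ certifies $\Phi(x,y)\in N_{G\times H}(U)$, while the constraint $\mu_x(y)\notin N_H(B(x))$ guarantees $\Phi(x,y)\notin N^H(B)$.

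The main obstacle will be ensuring that $\Phi$ also avoids $N^G(A)\cap N_{G\times H}(U)$ and is globally injective across columns: the image $\mu_x(L(x))$ has a priori no reason to be disjoint from the second coordinates of $N^G(A)\cap N_{G\times H}(U)$ in column $x$. I expect to address this by a further refinement of the Hall-type matching, enlarging either the domain or the forbidden set so as to record the interaction between $L(x)$, $B(x)$, and $N^G(A)\cap N_{G\times H}(U)$. This final step is where the hypothesis $a(H)\le\tfrac12$ (or $a(G)\le\tfrac12$ after swapping the roles of the two factors) will be used in an essential way to complete the injection and thereby establish $(\ast)$.
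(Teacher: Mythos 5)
Your plan hinges on the key inequality $(\ast)$: $|N^G(A)|+|N^H(B)|\le |N_{G\times H}(U)|$. This inequality is \emph{false}, even under the hypothesis of the theorem, so the whole strategy cannot succeed. Take $G=P_3$ with vertices $x_0,x,x_1$ and edges $\{x_0,x\},\{x,x_1\}$ (so $a(G)=\tfrac23$), take $H=C_4$ with vertices $v,w_1,v',w_2$ in cyclic order (so $a(H)=\tfrac12$), and take
\[
U=\{(x,v),\,(x_1,v),\,(x_0,v),\,(x_0,v')\}.
\]
One checks that $U$ is independent, that Zhu's partition is $A=\{(x_0,v')\}$ and $B=\{(x,v),(x_1,v),(x_0,v)\}$, that $N^G(A)=\{(x,v')\}$, that $N^H(B)$ consists of the six vertices $(x_i,w_j)$ with $x_i\in\{x_0,x,x_1\}$ and $j\in\{1,2\}$, and that $N_{G\times H}(U)$ is exactly this same six-element set. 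So $|N^G(A)|+|N^H(B)|=7>6=|N_{G\times H}(U)|$, while the theorem's conclusion $\tfrac{4}{10}\le\tfrac23$ still holds. The problem is that $(x,v')\in N^G(A)$ is a ``defect'' vertex (not in $N_{G\times H}(U)$), and the proposed Hall-type argument does not produce a compensating vertex: with $L(x)=\{v'\}$ and $B(x)=\{v\}$ one has $N_H(v')\setminus N_H(B(x))=\emptyset$, so the claimed condition $|N_H(S)\setminus N_H(B(x))|\ge|S|$ already fails for $S=L(x)$. Indeed the ``inclusion--exclusion upgrade'' you invoke has its inequality in the wrong direction: from $|N_H(S\cup B(x))|\ge|S|+|B(x)|$ one only gets $|N_H(S)\setminus N_H(B(x))|\ge|S|-(|N_H(B(x))|-|B(x)|)$, and the hypothesis $a(H)\le\tfrac12$ gives $|N_H(B(x))|\ge|B(x)|$, which makes the correction term nonpositive rather than nonnegative. (You also flag the cross-column injectivity as unresolved.)

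The paper avoids $(\ast)$ altogether. Rather than trying to inject $N^G(A)\cup N^H(B)$ into $N_{G\times H}(U)$, it subtracts the defect set $M=N^G(A)\setminus N_{G\times H}(U)$ outright, getting $|N_1|\ge b(G)|A|-|M|$ with $N_1=N^G(A)\cap N_{G\times H}(U)$, and then recovers the loss by observing that each column slice $(\hat B\cup M)(x)$ is independent in $H$, so $N_2=N^H(\hat B\cup M)$ satisfies $|N_2|\ge b(H)\,(|\hat B|+|M|)$, not just $|N_2|\ge b(H)|\hat B|$. Since $N_1$ and $N_2$ are disjoint subsets of $N_{G\times H}(U)$, summing gives $|N_{G\times H}(U)|\ge b(G)|A|+b(H)|B|+(b(H)-1)|M|$, and the hypothesis $a(H)\le\tfrac12$ (i.e.\ $b(H)\ge1$) is used exactly to make the last term nonnegative. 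This is strictly weaker than $(\ast)$ as a bound on $|N_{G\times H}(U)|$, but it is true, and it is all that is needed. To salvage your approach you would have to replace $(\ast)$ with an inequality of this weaker, $M$-aware form; a per-column Hall matching cannot deliver $(\ast)$ because $(\ast)$ is simply not a theorem.
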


\begin{proof}
We will show that for every independent set $U$ of $G\times H$ we have
\[\frac{|U|}{|U|+|N_{G\times H}(U)|}\le \max\{a(G),a(H)\}.\]
First, let $\hat{A}$, $\hat{B}$ and $C$ be the following subsets of $U$.
\begin{gather*}
\hat{A}=\{(x,y)\in U\, :\, \nexists (x',y)\in U \textrm{ s.t. } \{x,x'\}\in E(G), \textrm{ but } \exists (x,y')\in U \textrm{ s.t. } \{y,y'\}\in E(H)\},\\
\hat{B}=\{(x,y)\in U\, :\, \nexists (x,y')\in U \textrm{ s.t. } \{y,y'\}\in E(H), \textrm{ but } \exists (x',y)\in U \textrm{ s.t. } \{x,x'\}\in E(G)\},\\
C=\{(x,y)\in U\, :\, \nexists (x',y)\in U \textrm{ s.t. } \{x,x'\}\in E(G), \textrm{ and } \nexists (x,y')\in U \textrm{ s.t. } \{y,y'\}\in E(H)\}.
\end{gather*}
It is clear that $\hat{A}$, $\hat{B}$ and $C$ are pairwise disjoint. In addition, there is no $(x,y)\in U$ for which $\exists (x',y),(x,y')$ in $U$ such that $\{x,x'\}\in E(G)$ and $\{y,y'\}\in E(H)$, because $\{(x',y),(x,y')\}\in E(G\times H)$ and $U$ is an independent set. Hence {\bf ${\bf U}$ is partitioned into $\bf U=\hat{A}\uplus \hat{B}\uplus C$}.
(The connection with the partition of Zhu defined in (\ref{part}) is clearly the following, $A=\hat{A}\uplus C$ and $B=\hat{B}$.)

Observe that the definition of $a(G)$ can be rewritten as follows
$$\min\left\{\frac{|N_G(U)|}{|U|}\, :\, \textnormal{U is independent in $G$}\right\} = \frac{1-a(G)}{a(G)}.$$
Set $b(G)=\frac{1-a(G)}{a(G)}$. It it enough to prove that $\bf |N_{G\times H}(U)|\ge {\mathrm\mathbf min}\{b(G),b(H)\}|U|$. We shall give a lower bound for $|N_{G\times H}(U)|$ in two steps. 
\medskip

In the \textbf{first step} we consider the elements of $\hat{A}$ and $C$ for every $y\in V(H)$.
By definition $(\hat{A}\cup C)(y)$ is independent in $G$ for every $y\in V(H)$, therefore $|N_G((\hat{A}\cup C)(y))|\ge b(G)|(\hat{A}\cup C)(y)|$.\\
We partition $N^G(\hat{A}\cup C)$ into two parts, let 
$$N_1=N^G(\hat{A}\cup C)\cap N_{G\times H}(U)\textnormal{ \;\,\  and \;\,\ }M=N^G(\hat{A}\cup C)\setminus N_{G\times H}(U).$$ (It is easy to see that $N^G(\hat{A})\subseteq N_{G\times H}(U)$. However $N^G(C)\subseteq N_{G\times H}(U)$ is not necessarily true, that is why we make this partition.)
Thus for $N_1\subseteq N_{G\times H}(U)$ we have
\begin{equation}\label{n1}
|N_1|\ge b(G) \big(|\hat{A}|+|C|\big)-|M|.
\end{equation}

\medskip

In the \textbf{second step} we consider the elements of $\hat{B}$ and $M$ for every $x\in V(G)$. By the definition of $\hat{A}$ and $C$, $\hat{B}(x)$ and $M(x)$ are disjoint. Indeed, if $(x,y)\in M\subseteq N^G(\hat{A}\cup C)$ then $\exists (x',y)\in \hat{A}\cup C, \{x,x'\}\in E(G)$ and so $(x,y)$ cannot be in $\hat{B}\subseteq U$.

We claim that $(\hat{B}\cup M)(x)$ is independent in $V(H)$. Clearly, $\hat{B}(x)$ is independent. Furthermore, if $y,y'\in M(x), \{y,y'\}\in E(H)$ then
from $(x,y)\in M$ we get that $\exists (x',y)\in \hat{A}\cup C, \{x,x'\}\in E(G)$, hence $(x,y')\in M$ is a neighbor of $(x',y)\in U$ which contradicts that $M\cap N_{G\times H}(U)=\emptyset$. Similarly if $y\in \hat{B}(x), y'\in M(x), \{y,y'\}\in E(H)$ then from $(x,y)\in \hat{B}$ it follows that $\exists (x',y)\in U, \{x,x'\}\in E(G)$, but again, as $(x,y')\in M$ is a neighbor of $(x',y)\in U$ it is in a contradiction with the definition of $M$.
Therefore $|N_H((\hat{B}\cup M)(x))|\ge b(G) |(\hat{B}\cup M)(x)|$. Let
$$N_2=N^H(\hat{B}\cup M).$$ Considering the sum for all $x\in V(G)$ we obtain 
\begin{equation}\label{n2}
|N_2|\ge b(H) \big(|\hat{B}|+|M|\big).
\end{equation}

We show that $N_2\subseteq N_{G\times H}(U)$. On the one hand, if $y\in \hat{B}(x)$ and $y'$ is a neighbor of $y$ in $H$, and so $(x,y')\in N^H(\hat{B})$ then by the definition of $\hat{B}$, $\exists (x',y)\in U, \{x,x'\}\in E(G)$, hence $(x,y')$ is a neighbor of $(x',y)\in U$, that is, $(x,y')\in N_{G\times H}(U)$. On the other hand, if $y\in M(x)$ and $y'$ is a neighbor of $y$ in $H$, and so $(x,y')\in N^H(M)$ then by the definition of~$M$, $\exists (x',y)\in \hat{A}\cup C, \{x,x'\}\in E(G)$, therefore $\{(x',y),(x,y')\}\in E(G\times H)$, thus $(x,y')\in N_{G\times H}(U)$.

\medskip

Next we prove that the neighborhood sets gotten in the two steps, {\bf $\bf N_1$ and $\bf N_2$ are disjoint}. Suppose indirectly, that $(x,y)\in N_1\cap N_2$. Then $(x,y)\in N_1$ implies that $\exists (x',y)\in \hat{A}\cup C, \{x,x'\}\in E(G)$. While from $(x,y)\in N_2$ we get that $\exists (x,y')\in \hat{B}$ or $\exists (x,y')\in M$ satisfying $\{y,y'\}\in E(H)$. It is a contradiction since $(x',y)$ and $(x,y')$ are adjacent in $G\times H$, but no edge can go between $\hat{A}\cup C$ and $\hat{B}\cup M$ by the independence of $U$ and the definition of $M$.
As $N_1, N_2\subseteq N_{G\times H}(U)$ this yields
\begin{equation}\label{n1n2}
|N_{G\times H}(U)|\ge |N_1|+|N_2|.
\end{equation}

\medskip

From (\ref{n1}), (\ref{n2}) and (\ref{n1n2}) we obtain that
$$|N_{G\times H}(U)|\ge |N_1|+|N_2|\ge \bigg(b(G) \big(|\hat{A}|+|C|\big)-|M|\bigg) + \bigg(b(H) \big(|\hat{B}|+|M|\big)\bigg).$$
{\bf If $\bf a(H)\le \frac{1}{2}$}, that is $b(H)\ge 1$, then
\begin{multline*}
\bigg(b(G) \big(|\hat{A}|+|C|\big)-|M|\bigg) + \bigg(b(H) \big(|\hat{B}|+|M|\big)\bigg) \ge \\
\ge \min\{b(G),b(H)\}\bigg(|\hat{A}|+|\hat{B}|+|C|\bigg) + \big(b(H)-1\big)|M| \ge \min\{b(G),b(H)\}|U|.
\end{multline*}
Combining the latter two inequalities we obtain $|N_{G\times H}(U)|\ge \min\{b(G),b(H)\}|U|$, as desired.

\medskip

\noindent {\bf If $\bf a(G)\le \frac{1}{2}$} (and $a(H)>\frac{1}{2}$) we can change the role of $G$ and $H$ to get the same lower bound for $|N_{G\times H}(U)|$, or we can argue as follows. We distinguish two cases. First, suppose $|\hat{A}|+|C|-\frac{|M|}{b(G)}\ge 0$. By using $b(G)\ge 1$ this gives
\begin{multline*}
\bigg(b(G) \big(|\hat{A}|+|C|\big)-|M|\bigg) + \bigg(b(H) \big(|\hat{B}|+|M|\big)\bigg) = b(G) \left(|\hat{A}|+|C|-\frac{|M|}{b(G)}\right) + b(H) \big(|\hat{B}|+|M|\big) \ge \\
\ge \min\{b(G),b(H)\}\bigg(|\hat{A}|+|\hat{B}|+|C|+|M|\big(1-\frac{1}{b(G)}\big)\bigg) \ge \min\{b(G),b(H)\}|U|,
\end{multline*}
finishing the inequality chain.
While from $|\hat{A}|+|C|-\frac{|M|}{b(G)}<0$ and $b(G)\ge 1$ it follows $|\hat{A}|+|C|<|M|$, hence we have
$$|N_{G\times H}(U)|\ge |N_2|\ge b(H) \big(|\hat{B}|+|M| \big)\ge \min\{b(G),b(H)\}|U|.$$

\medskip

Consequently, $|N_{G\times H}(U)|\ge \min\left\{\frac{1-a(G)}{a(G)},\frac{1-a(H)}{a(H)}\right\}|U|$ in both cases, that is $\frac{|U|}{|U|+|N_{G\times H}(U)|}\le \max\{a(G),a(H)\}$, this completes the proof.
\end{proof}

\bigskip

We mentioned in the Introduction that the two forms of Question \ref{strong} are equivalent. Hence from the equality $a^*(G^{\times 2})=a^*(G)$ for every graph $G$ we obtain the following corollary. (Indeed, suppose on the contrary that $G$ is a graph with $a^*(G)<A(G)$ then $\exists k$ such that $a^*(G)<i(G^{\times k})\le a^*(G^{\times k})$, and as the sequence $\{a^*(G^{\times \ell })\}_{\ell =1}^{\infty}$ is monotone increasing, it follows that $\exists m$ for which $a^*(G^{\times m})<a^*(G^{\times 2m})$, giving a contradiction.)

\begin{corollary}\label{cor}
For every graph $G$ we have $A(G)=a^*(G)$.
\end{corollary}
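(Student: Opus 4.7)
The plan is to use Theorem~\ref{strongthm} as a black box and deduce $A(G) = a^*(G)$ via the iteration sketched in the parenthetical remark preceding the corollary. The chain $i(G) \le a(G) \le a^*(G) \le A(G)$ (noted in the Introduction, using the Brown--Nowakowski--Rall lower bound on $A(G)$) already gives the ``$\ge$'' direction, so the whole proof reduces to showing $A(G) \le a^*(G)$.

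First, I would establish the identity $a^*(G^{\times 2}) = a^*(G)$ for every graph $G$. In the case $a(G) \le \tfrac{1}{2}$, Theorem~\ref{strongthm} applied with $H = G$ gives $a(G^{\times 2}) \le a(G)$; the reverse inequality is automatic from the product construction $U \mapsto U \times V(G)$ (an independent set $U$ in $G$ yields an independent set $U \times V(G)$ in $G^{\times 2}$ with the same neighborhood ratio, provided $G$ has no isolated vertex --- the isolated-vertex case forces $a(G) = 1$, hence $A(G) = 1 = a^*(G)$ and the corollary is trivial there). So $a(G^{\times 2}) = a(G) \le \tfrac{1}{2}$, and the identity follows. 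In the case $a(G) > \tfrac{1}{2}$, the same product construction gives $a(G^{\times 2}) \ge a(G) > \tfrac{1}{2}$, so both $a^*(G^{\times 2})$ and $a^*(G)$ equal $1$.

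Next I would iterate: applying the identity to $G^{\times m}$ in place of $G$ gives $a^*(G^{\times 2^t}) = a^*(G)$ for every $t \ge 0$. The product construction $U \mapsto U \times V(G)$ also shows that the sequence $\{a^*(G^{\times \ell})\}_{\ell \ge 1}$ is non-decreasing, so sandwiching an arbitrary $k$ between $1$ and $2^t$ for $t$ large enough yields $a^*(G^{\times k}) = a^*(G)$ for every $k$. Using $i(G^{\times k}) \le a(G^{\times k}) \le a^*(G^{\times k})$ and passing to the limit then gives $A(G) \le a^*(G)$, completing the proof.

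The main conceptual work is already packaged inside Theorem~\ref{strongthm}; what remains is essentially bookkeeping, and the only mildly fiddly point is to dispatch the isolated-vertex case separately so that the monotonicity argument applies cleanly in the remaining case.
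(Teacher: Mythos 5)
Your proof is correct and takes essentially the same route as the paper: first establish $a^*(G^{\times 2}) = a^*(G)$ from Theorem~\ref{strongthm} together with the trivial reverse inequality, then iterate via the monotonicity of $\{a^*(G^{\times \ell})\}_{\ell}$ and pass to the limit. The paper packages this as a short contradiction argument in the parenthetical remark preceding the corollary and does not bother to isolate the isolated-vertex case, which is in any event harmless (if some power has an isolated vertex then $a=1$ for that power, so monotonicity still holds).
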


\smallskip

\section{Further consequences}\label{sec4}

Brown, Nowakowski and Rall in \cite{bnr} asked whether $A(G\cup H) = \max\{A(G), A(H)\}$, where $G\cup H$ denotes the disjoint union of $G$ and $H$. From Corollary \ref{cor} we immediately receive this equality since the analogue statement, $a^*(G\cup H) = \max\{a^*(G), a^*(H)\}$ is straightforward. In \cite{al} it is shown that $A(G\cup H)=A(G\times H)$, therefore we have
\[A(G\cup H)=A(G\times H)=\max\{A(G),A(H)\}, \textnormal{\, for every graph $G$ and $H$}.\]

The authors of \cite{bnr} also addressed the question whether $A(G)$ is computable, and if so what is its complexity. They showed that if $G$ is bipartite then $A(G)=\frac{1}{2}$ if $G$ has a perfect matching, and $A(G)=1$ otherwise. Hence for bipartite graphs $A(G)$ can be determined in polynomial time. 
Moreover, it is proven in \cite{al} that $a(G)\le \frac{1}{2}$ if and only if $G$ contains a fractional perfect matching. Therefore given an input graph $G$, determining whether $A(G)=1$ or $A(G)\le \frac{1}{2}$ can be done in polynomial time. They also mentioned that deciding whether $a(G)>t$ for a given graph $G$ and a given value $t$, is NP-complete. From Corollary \ref{cor} we can conclude that $A(G)$ can be calculated, and the problem of deciding whether $A(G)>t$ is NP-complete too.

Although any rational number in $(0,\frac 12]\cup\{1\}$ is the ultimate categorical independence ratio for some graph $G$, as it is showed \cite{bnr}. Here we remark that we obtained that $A(G)$ cannot be irrational, solving another problem mentioned in \cite{bnr}.

\end{document}